\newcommand{\RR}{\ensuremath{\mathbb{R}}}
\newcommand{\QQ}{\ensuremath{\mathbb{Q}}}
\newcommand{\ZZ}{\ensuremath{\mathbb{Z}}}
\newtheorem{theorem}{Theorem}[section]
\newtheorem{lemma}[theorem]{Lemma}
\theoremstyle{definition}
\newtheorem{remark}[theorem]{Remark}
\def\QuotS#1#2{\leavevmode\kern-.0em\raise.2ex\hbox{$#1$}\kern-.1em/\kern-.1em\lower.25ex\hbox{$#2$}}
\DeclareMathOperator{\Aut}{Aut}
\DeclareMathOperator{\Sym}{Sym}
\DeclareMathOperator{\Isom}{Isom}
\begin{document}

\author{Mathieu Dutour Sikiri\'c}
\address{Mathieu Dutour Sikiri\'c, Rudjer Boskovi\'c Institute, Bijenicka 54, 10000 Zagreb, Croatia}
\email{mdsikir@irb.hr}

\author{Anna Felikson}
\address{Anna Felikson, Independent University of Moscow, B. Vlassievskii 11, 119002 Moscow, Russia}
\email{felikson@mccme.ru}

\author{Pavel Tumarkin}
\address{Pavel Tumarkin, Independent University of Moscow, B. Vlassievskii 11, 119002 Moscow, Russia}
\email{pasha@mccme.ru}

\thanks{Research supported by the
Croatian Ministry of Science, Education and Sport under
contract 098-0982705-2707 (M.D.), RFBR grant 07-01-00390-a (A.F. and P.T.), and INTAS grants YSF-06-10000014-5916 (A.F.) and YSF-06-10000014-5766 (P.T.)}

\title{Automorphism groups of root systems matroids}

\date{}

\begin{abstract}
Given a root system $\mathsf{R}$, the vector
system $\tilde{\mathsf{R}}$ is obtained by taking a representative $v$
in each antipodal pair $\{v, -v\}$. The matroid $M(\mathsf{R})$
is formed by all independent subsets of $\tilde{\mathsf{R}}$.
The automorphism group of a matroid is the group of permutations preserving
its independent subsets.
We prove that the automorphism groups of all irreducible root systems
matroids $M(\mathsf{R})$  are uniquely determined
by their independent sets of size $3$. As a corollary, we compute 
these groups explicitly, and thus complete the classification of 
the automorphism groups of root systems matroids.
\end{abstract}

\maketitle

\section{Introduction}

Given a vector $v\in \RR^n$, denote by $H_{v}$
the hyperplane of vectors orthogonal to $v$ and
by $s_v$ the orthogonal reflection along $H_v$.
A {\em root system} $\mathsf{R}$ is a finite
family of vectors $v\in \RR^n$, such that:
\begin{itemize}
\item $\mathsf{R}\cap \RR v=\{v, -v\}$ for all $v\in \mathsf{R}$,
\item $s_v \mathsf{R}=\mathsf{R}$ for all $v\in \mathsf{R}$.
\end{itemize}
The norms of the roots are not specified a priori in our definition.
If $\mathsf{R}$ splits into $r$ orbits under the action of $W(\mathsf{R})$, 
then $r$ norms, a priori different, are possible. 
A root system $\mathsf{R}$ is {\em irreducible} if
$\mathsf{R}$ cannot be decomposed into two
orthogonal components.

The groups $W(\mathsf{R})$ generated by the reflections
$(s_v)_{v\in \mathsf{R}}$ are exactly finite Coxeter groups. We call a finite  
Coxeter group {\em indecomposable} if the corresponding root system is irreducible. 
Finite indecomposable Coxeter groups are classified into
the following ones:  $\mathsf{A}_n$, $\mathsf{B}_n$,
$\mathsf{D}_n$, $\mathsf{E}_6$, $\mathsf{E}_7$, $\mathsf{E}_8$,
$\mathsf{F}_4$, $\mathsf{I}_2(m)$, $\mathsf{H}_3$ and $\mathsf{H}_4$
(see e.g.~\cite[Chapter 2]{humphreyscoxeter}).



Given a finite set $X$, a {\em matroid} $M$ is a
family ${\mathcal I}$ of subsets $S$ of $X$ (called {\em independent sets}) such that:
\begin{itemize}
\item ${\mathcal I}\not= \emptyset$
\item for any $S\in {\mathcal I}$, any $S'\subset S$ one has 
$S'\in {\mathcal I}$.
\item If $A, B\in {\mathcal I}$, $|A| > |B|$ then $\exists x\in A\setminus B$ such that $B\cup\{x\}\in {\mathcal I}$.
\end{itemize}
One way to get a matroid is to take $X$ to be a family of
vectors and ${\mathcal I}$ the linearly independent subsets
of $X$.
We denote by ${\mathcal B}$ the set of all {\em bases}, i.e. maximal
independent sets of a matroid.
A {\em circuit} is a non-independent set such that each of its
proper subset is independent; we denote by ${\mathcal C}$,
respectively ${\mathcal C}_3$, the set of circuits of a matroid, respectively
circuits of order $3$.
A matroid is uniquely defined either by its independent
sets, bases or circuits.
The matroid automorphism group $\Aut(M)$ is the set of all
permutations of $X$, which preserve ${\mathcal I}$, or
equivalently ${\mathcal B}$ or ${\mathcal C}$.

Given a root system $\mathsf{R}$, $X=\tilde{\mathsf{R}}$ is obtained
by selecting a representative in each pair $\{v, -v\}$
of vectors.
We define a matroid $M(\mathsf{R})$ on $X$ by
taking ${\mathcal I}$ to be the subsets of $X$
that are linearly independent.




We prove the following theorem.

\begin{theorem}\label{m3}
Let $\mathsf{R}$ be an irreducible root system.
A permutation $\phi$ of $\tilde{\mathsf{R}}$ is an automorphism of $M(R)$ if and only if it
preserves ${\mathcal C}_3$.

\end{theorem}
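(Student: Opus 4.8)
The forward implication is immediate: an automorphism of $M(\mathsf{R})$ preserves the whole circuit set $\mathcal{C}$, hence in particular preserves its members of size $3$. The entire content lies in the converse, and the plan is to show that $\mathcal{C}_3$ already determines the matroid closure operator. Recall that for $A\subseteq \tilde{\mathsf{R}}$ the closure is $\overline{A}=\tilde{\mathsf{R}}\cap\langle A\rangle$, and that $A\in\mathcal{I}$ if and only if no element of $A$ lies in the closure of the remaining ones; thus it suffices to prove that $\phi$ preserves $\overline{\,\cdot\,}$. I would introduce the \emph{triangle closure} $\sigma(A)$: the smallest superset of $A$ with the property that whenever $a,b\in\sigma(A)$ and $\{a,b,c\}\in\mathcal{C}_3$, also $c\in\sigma(A)$. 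Since $\phi$ preserves $\mathcal{C}_3$ bijectively, it intertwines the defining rule, so $\phi(\sigma(A))=\sigma(\phi(A))$ for every $A$ automatically. Everything then reduces to the key identity $\sigma(A)=\overline{A}$ for all $A$.

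The inclusion $\sigma(A)\subseteq\overline{A}$ is clear, since the third vector of a $3$-circuit lies in the span of the other two. For the reverse inclusion I would first localize the problem. Write $\mathsf{R}'=\mathsf{R}\cap\langle A\rangle$, a root subsystem, and decompose it into orthogonal irreducible components $\mathsf{R}'_1\perp\dots\perp\mathsf{R}'_k$ spanning $V_1\perp\dots\perp V_k$. A $3$-circuit spans a $2$-plane and therefore cannot straddle two orthogonal components (its third vector would have to be parallel to one of the first two), so $\sigma$ respects this decomposition; moreover each root of $A$ lies in a single component, so $A$ partitions as $\bigsqcup_i (A\cap V_i)$ with $A\cap V_i$ spanning $V_i$. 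This reduces the claim to the following statement about a single \emph{irreducible} root system $\mathsf{S}$: any $B\subseteq\tilde{\mathsf{S}}$ spanning $V(\mathsf{S})$ has $\sigma(B)=\tilde{\mathsf{S}}$. Because $\sigma$ is monotone and every spanning set contains a basis, it is enough to treat the case where $B$ is a basis.

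For the irreducible claim I would argue by induction on the rank $n$. When $n\le 2$ it is trivial: any two non-parallel roots of a rank-$2$ system together with any third root of that plane constitute a member of $\mathcal{C}_3$, so a single step of $\sigma$ already produces every root. For the inductive step, the mechanism is the building-up of roots along chains: for a simple system, the standard fact that every positive root is a sum of simple roots with all partial sums again roots yields, at each step, a circuit $\{\gamma,\alpha,\gamma+\alpha\}\in\mathcal{C}_3$, and iterating reaches every root of $\tilde{\mathsf{S}}$ from the simple roots. To cover an arbitrary basis, I would show that $\sigma(B)$ contains a spanning set of a suitable irreducible corank-$1$ subsystem, invoke the inductive hypothesis to fill that subsystem completely, and then use one further root of $B$ together with rank-$2$ triangle closures to recover the remaining roots, exploiting that in an irreducible system every root is linked to a fixed subsystem through a sequence of rank-$2$ subsystems each containing at least three roots.

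The main obstacle is exactly this last connectivity statement in full generality: passing from simple-system bases to arbitrary bases, and making the descent-and-ascent step uniform across all the irreducible types $\mathsf{A}_n,\mathsf{B}_n,\mathsf{D}_n,\mathsf{E}_6,\mathsf{E}_7,\mathsf{E}_8,\mathsf{F}_4,\mathsf{H}_3,\mathsf{H}_4$, including the non-simply-laced and exceptional cases where a single clean chain argument may not be available. I expect to settle these either by a rank induction supported by the rank-$2$ and rank-$3$ base cases, or, where a uniform argument resists, by a short type-by-type verification that the triangle closure of any basis is the whole system. Once $\sigma=\overline{\,\cdot\,}$ is established, $\phi(\overline{A})=\overline{\phi(A)}$ for all $A$, and the characterization of independence via closure shows that $\phi$ preserves $\mathcal{I}$, completing the proof.
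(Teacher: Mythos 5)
Your proof breaks at its central claim: the identity $\sigma(A)=\overline{A}$ between triangle closure and matroid closure is not merely hard to establish type-by-type, it is false. Take $\mathsf{R}=\mathsf{D}_4$ and the basis $B=\{e_1-e_2,\ e_1+e_2,\ e_3-e_4,\ e_3+e_4\}$. Any two elements of $B$ span a $2$-plane containing no further roots of $\mathsf{D}_4$: the plane spanned by $e_1-e_2$ and $e_1+e_2$ meets $\mathsf{D}_4$ only in $\pm(e_1\pm e_2)$ (the vectors $\pm e_i$ are not roots of $\mathsf{D}_4$), and the plane spanned by $e_1-e_2$ and $e_3-e_4$ contains no root with four nonzero coordinates. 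Hence no two elements of $B$ lie in a common $3$-circuit, the defining rule of $\sigma$ never applies, and $\sigma(B)=B$, while $\overline{B}=\tilde{\mathsf{D}}_4$ since $B$ spans $\mathbb{R}^4$. The same phenomenon occurs in $\mathsf{H}_3$ with the three mutually orthogonal roots along the coordinate axes (every other root of $\mathsf{H}_3$ has all coordinates nonzero, so each coordinate plane contains exactly two root pairs), and in $\mathsf{E}_8$ with $\{e_1\pm e_2, e_3\pm e_4, e_5\pm e_6, e_7\pm e_8\}$. So the ``connectivity statement'' you flagged as the main obstacle is genuinely false, not just unproven, and no amount of case analysis can repair the plan of showing that $\phi$ intertwines the closure operator via $3$-circuit generation; in particular your rank induction already fails at rank $3$ for $\mathsf{H}_3$ and rank $4$ for $\mathsf{D}_4$.

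The theorem is nonetheless true, and the paper's mechanism is instructive to compare: it never tries to generate $\overline{A}$ from $A$ pointwise. Instead, for the classical systems it proves by induction on circuit order that if a permutation $f$ preserves all circuits of order at most $k$ (with $k\geq 3$), then it preserves circuits of order $k+1$: given a circuit $J$ of order $k+1$, one \emph{adds} a suitably chosen root so that the enlarged set splits into two circuits of order at most $k$ overlapping in the added root; applying $f$ and eliminating the image of the added root from the two resulting linear dependencies produces a dependence on $f(J)$. The crucial difference from your approach is that the induction hypothesis carries the invariance of \emph{all} circuits up to order $k$, not only $\mathcal{C}_3$, which is exactly the extra strength needed to handle circuits (and bases) whose elements pairwise lie in no $3$-circuit, such as the examples above. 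The choice of the auxiliary root relies on the classification of circuits of $\mathsf{A}_n$, $\mathsf{B}_n$, $\mathsf{D}_n$ via colored graphs from the paper of Felikson and Tumarkin, and the exceptional types are settled by a computer check that $\Aut(G(X,\mathcal{C}_3))$ consists of matroid automorphisms. If you want a hand proof along your lines, you would have to replace triangle closure by this order-by-order circuit-splitting argument.
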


The automorphism groups $\Aut(M(\mathsf{R}))$ for classic root systems $\mathsf{A}_n$, $\mathsf{B}_n$ and $\mathsf{D}_n$ were computed in~\cite{cubesimplexcases}.
The root system $\mathsf{F}_4$, respectively $\mathsf{H}_3$ was investigated in~\cite{F4syst}, respectively \cite{ehly}.
While proving Theorem~\ref{m3}, we compute also the automorphism groups 
of the root system matroids for the remaining exceptional root systems, so we complete the classification of the groups $\Aut(M(\mathsf{R}))$.    

Denote by ${\Isom}(\mathsf{R})$ the group of isometries of $\RR^n$ preserving $\mathsf{R}$ (see Section~\ref{isom} for our choice of root lengths). 
Clearly, any isometry of $\mathsf{R}$ is an element of $\Aut(M(\mathsf{R}))$, so  $\QuotS{\Isom(\mathsf{R})}{\pm\rm{Id}}\subseteq\Aut(M(\mathsf{R}))$. 
Denote also by $G_a$ the subgroup of $W(\mathsf{R})$ of order $2$ containing the antipodal involution (if any), 
and let ${W^{\sigma}(\mathsf{R})}$ be the extension of ${W(\mathsf{R})}$ defined in Section~\ref{isom}.    

\begin{theorem}\label{ClassifResult}
Let $\mathsf{R}$ be a root system. 

\begin{itemize}

\item[(i)] If $\mathsf{R}$ is irreducible then the groups $\Aut(M(\mathsf{R}))$ are given in Table~\ref{answer}.
\begin{table}[!h]
\begin{center}
\caption{Automorphism groups of root system matroids}
\label{answer}
\begin{tabular}{c|c|c|c|c}
$\mathsf{R}$  & $|\mathsf{R}|$ & $|W(\mathsf{R})|$ & ${\Isom}(\mathsf{R})$&$\Aut(M(\mathsf{R}))$\\
\hline
$\mathsf{A}_n$&$n(n+1)$ & $(n+1)!$ &$W(\mathsf{A}_n)\times\ZZ_2$& $W(\mathsf{A}_n)$\\
$\mathsf{B}_n$&$2n^2$ & $2^{n} n!$ &$W(\mathsf{B}_n)$& $\QuotS{W(\mathsf{B}_n)}{G_a}$\\
$\mathsf{D}_4$&$24$ & $192$&$W(\mathsf{F}_4)$&$\QuotS{W(\mathsf{F}_4)}{G_a}$\\
$\mathsf{D}_n\; (n\geq 5)$&$2n(n-1)$ & $2^{n-1} n!$&$W(\mathsf{B}_n)$&$\QuotS{W(\mathsf{B}_n)}{G_a}$\\
$\mathsf{E}_6$&$72$ &$51840$ &$W(\mathsf{E}_6)\times\ZZ_2$&  $W(\mathsf{E}_6)$\\
$\mathsf{E}_7$&$126$ &$2903040$ &$W(\mathsf{E}_7)$& $\QuotS{W(\mathsf{E}_7)}{G_a}$\\
$\mathsf{E}_8$&$240$ &$696729600$ &$W(\mathsf{E}_8)$& $\QuotS{W(\mathsf{E}_8)}{G_a}$\\
$\mathsf{F}_4$& $48$  & $1152$ &$W(\mathsf{F}_4)$& $\QuotS{W^{\sigma}(\mathsf{F}_4)}{G_a}$\\
$\mathsf{H}_3$& $30$  & $120$  &$W(\mathsf{H}_3)$& $\QuotS{W^{\sigma}(\mathsf{H}_3)}{G_a}$\\
$\mathsf{H}_4$& $120$ & $14400$ &$W(\mathsf{H}_4)$& $\QuotS{W^{\sigma}(\mathsf{H}_4)}{G_a}$\\
$\mathsf{I}_2(m)$ & $2m$ & $2m$ &$W(\mathsf{I}_2(2m))$& $\Sym(m)$\\
\end{tabular}
\end{center}
\end{table}
\item[(ii)] If $\mathsf{R}=\sum_{i=1}^m p_i \mathsf{R}_i$ with $\mathsf{R}_i$
irreducible then
$$\Aut(M(\mathsf{R}))=\Pi_{i=1}^m \mathrm{wr}(\Sym(p_i), \Aut(M(\mathsf{R}_i)))$$
where $\mathrm{wr}$ stands for wreath product.


\end{itemize}


\end{theorem}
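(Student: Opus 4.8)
The plan is to deduce the whole statement from Theorem~\ref{m3}, which lets me replace $\Aut(M(\mathsf{R}))$ by the stabiliser in $\Sym(\tilde{\mathsf{R}})$ of the family $\mathcal{C}_3$ of $3$-element circuits; every argument below concerns this stabiliser. Two inclusions hold in all cases. First, each element of $\Isom(\mathsf{R})$ is linear and so preserves linear dependence and $\mathcal{C}_3$; since $-\mathrm{Id}$ fixes every point of $\tilde{\mathsf{R}}$, the image of $\Isom(\mathsf{R})$ in $\Sym(\tilde{\mathsf{R}})$ is $\QuotS{\Isom(\mathsf{R})}{\pm\mathrm{Id}}$, giving $\QuotS{\Isom(\mathsf{R})}{\pm\mathrm{Id}}\subseteq\Aut(M(\mathsf{R}))$. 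Second, for $\mathsf{F}_4,\mathsf{H}_3,\mathsf{H}_4$ the extra generator $\sigma$ of $W^{\sigma}(\mathsf{R})$ from Section~\ref{isom} --- the long--short duality for $\mathsf{F}_4$ and the Galois symmetry $\sqrt5\mapsto-\sqrt5$ for $\mathsf{H}_3,\mathsf{H}_4$ --- permutes $\tilde{\mathsf{R}}$ preserving linear dependence (for $\mathsf{H}$ this is because dependence among roots with coordinates in $\QQ(\sqrt5)$ is detected over $\QQ(\sqrt5)$) while failing to be an isometry, so $\QuotS{W^{\sigma}(\mathsf{R})}{G_a}\subseteq\Aut(M(\mathsf{R}))$ there. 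The rank-$2$ system $\mathsf{I}_2(m)$ is then immediate: $\tilde{\mathsf{R}}$ consists of $m$ pairwise independent vectors in $\RR^2$, so every $3$-subset is a circuit, $\mathcal{C}_3$ is preserved by all permutations, and $\Aut(M(\mathsf{I}_2(m)))=\Sym(m)$.

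The entries for $\mathsf{A}_n,\mathsf{B}_n,\mathsf{D}_n$ are established in~\cite{cubesimplexcases}, and those for $\mathsf{F}_4$ and $\mathsf{H}_3$ in~\cite{F4syst} and~\cite{ehly}; it remains to settle $\mathsf{E}_6,\mathsf{E}_7,\mathsf{E}_8$ and $\mathsf{H}_4$, where I must prove the reverse inclusion, i.e. that no $\mathcal{C}_3$-preserving permutation escapes the groups of Table~\ref{answer}. For the simply-laced systems $\mathsf{E}_6,\mathsf{E}_7,\mathsf{E}_8$ all roots have equal length, and a $3$-circuit is exactly an $\mathsf{A}_2$-subsystem $\{u,v,w\}$ with $w=\pm u\pm v$. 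The scheme is: given $\phi$ in the stabiliser $G$ of $\mathcal{C}_3$, compose with a suitable element of $W(\mathsf{R})$ so that $\phi$ fixes a chosen frame of $\tilde{\mathsf{R}}$ spanning $\RR^n$; then use that the triangle relations of $\mathcal{C}_3$ propagate along chains of $\mathsf{A}_2$-subsystems to reconstruct every root from the frame, forcing $\phi$ to agree with a linear map on all of $\tilde{\mathsf{R}}$. A linear automorphism of a simply-laced root system preserves the single root length, hence the Gram matrix of a simple system, hence the inner product, so it is an isometry; therefore $G=\QuotS{\Isom(\mathsf{R})}{\pm\mathrm{Id}}$, the group listed in Table~\ref{answer}. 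For $\mathsf{H}_4$ the same reconstruction runs, but the relevant $2$-planes now carry $\mathsf{A}_2$- and $\mathsf{I}_2(5)$-subsystems and the frame determines the remaining roots only up to the Galois symmetry $\sigma$; this slack is exactly what enlarges the answer to $\QuotS{W^{\sigma}(\mathsf{H}_4)}{G_a}$.

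I expect the reconstruction/rigidity step to be the main obstacle, especially for $\mathsf{E}_8$ and $\mathsf{H}_4$, both because $\tilde{\mathsf{R}}$ is large and because one must rule out sporadic $\mathcal{C}_3$-preserving permutations that the propagation argument does not obviously exclude. The safe route here is a finite verification: enumerate $\mathcal{C}_3$ explicitly and compute the automorphism group of this $3$-uniform hypergraph directly, confirming that the frame-stabiliser is trivial (respectively generated by $\sigma$ for $\mathsf{H}_4$) and that the group has the order predicted by Table~\ref{answer}.

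For part~(ii) write $\mathsf{R}=\sum_{i=1}^m p_i\mathsf{R}_i$ with the $\mathsf{R}_i$ irreducible and the components pairwise orthogonal. The key observation is that every $3$-circuit lies in a single component: if $u,v$ belong to one component and $w$ to an orthogonal one, then $w\perp\mathrm{span}(u,v)$ and $\{u,v,w\}$ is independent. Hence $\mathcal{C}_3$ is the disjoint union of the $\mathcal{C}_3$ of the individual copies, and any $\phi\in\Aut(M(\mathsf{R}))$ permutes the copies. Since each irreducible component of rank $\geq 2$ is connected under the relation ``lying in a common $3$-circuit'' while a rank-$1$ copy carries no circuit, $\phi$ respects this partition; as $\phi$ restricts to a matroid isomorphism between the copies it exchanges, only copies of the same type can be swapped (their matroids are distinguished by the invariants of part~(i)), and on each copy $\phi$ acts by an element of $\Aut(M(\mathsf{R}_i))$. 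Assembling the freedom to permute the $p_i$ copies of each type with the independent action on each copy yields exactly $\prod_{i=1}^m\mathrm{wr}(\Sym(p_i),\Aut(M(\mathsf{R}_i)))$, completing the proof.
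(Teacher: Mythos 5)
Your proposal is correct and takes essentially the same route as the paper: the decisive step for the exceptional systems is the explicit enumeration of ${\mathcal C}_3$ and a finite (machine) computation of the automorphism group of the hypergraph $(X,{\mathcal C}_3)$ --- exactly what the paper does with {\tt nauty} --- combined with citations to the literature for $\mathsf{A}_n$, $\mathsf{B}_n$, $\mathsf{D}_n$, the direct uniform-matroid argument for $\mathsf{I}_2(m)$, and the component-permutation argument for part (ii). Your hand-made ``reconstruction/rigidity'' sketch for $\mathsf{E}_6$, $\mathsf{E}_7$, $\mathsf{E}_8$, $\mathsf{H}_4$ is never carried out, but since you yourself replace it by the finite verification, the proof as submitted coincides in substance with the paper's.
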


In Section~\ref{isom} we introduce coordinates for the root systems and describe additional symmetries. 
In Section~\ref{ex} we provide the proof of Theorems~\ref{m3} and~\ref{ClassifResult} for all the exceptional root systems.
Section~\ref{abd} is devoted to the proof of Theorem~\ref{m3} for classical root systems.  

\begin{remark}
It is worth to mention that the notion of root system can be extended
to any finitely generated Coxeter group (see~\cite[Section~5.4]{humphreyscoxeter}).
It would be interesting to see if Theorem~\ref{m3} holds in such a setting
with the corresponding extension of the notion of matroid to infinite sets.
\end{remark}

\section{Isometries and automorphism groups of root systems}
\label{isom}
We use standard coordinates for root systems of simple Lie algebras (except $G_2$), see~\cite[Section 2.10]{humphreyscoxeter}.
  
The root system $\mathsf{A}_n$ is the set of
roots $\{e_i-e_j\}$, $1\le i,j\le n+1$, in $\RR^{n+1}$.
All the roots are contained in
$n$-dimensional subspace with sum of coordinates equal
to zero. The group $W(\mathsf{A}_n)$ is the group ${\Sym}(n+1)$.
It is easy to see that $W(\mathsf{A}_n)$ does not contain an antipodal involution.
The group $\Isom(\mathsf{A}_n)$ is a central extension of  $W(\mathsf{A}_n)$ by the antipodal map. 
 
The root systems $\mathsf{E}_6$, $\mathsf{E}_7$ and $\mathsf{E}_8$ 
have special coordinates and are defined in~\cite{bourbaki,humphreyscoxeter}.
The groups $\Isom(\mathsf{E}_7)$ and  $\Isom(\mathsf{E}_8)$ coincide with  $W(\mathsf{E}_7)$ and  
$W(\mathsf{E}_8)$ respectively since the reflection groups already contain an antipodal involution.  
The group $\Isom(\mathsf{E}_6)$ is an extension of  $W(\mathsf{E}_6)$ by the antipodal map. 

The set $\mathsf{D}_n$ of roots is $\{\pm e_i\pm e_j\}$, $1\le i<j\le n$, in $\RR^{n}$. 
We describe its isometry group below. 

All root systems considered so far had only one orbit of roots under
$W(\mathsf{R})$ and so only one length of roots. The following root systems of 
simple Lie algebras have roots of two different lengths.  

The root system $\mathsf{B}_n$ is formed by the
roots $(\pm e_i)_{1\leq i\leq n}$ called {\em short roots}
and the roots $(\pm e_i\pm e_j)_{1\leq i<j\leq n}$ of
$\mathsf{D}_n$ called {\em long roots}. 
Note that $W(\mathsf{B}_n)$ preserves $\mathsf{D}_n$ as well.
This implies that $\Isom(\mathsf{D}_n)=\Isom(\mathsf{B}_n)=W(\mathsf{B}_n)$.

Denote by $\mathsf{D}'_{4}$ the root system formed by the $8$ vectors
$\pm e_i$ for $1\leq i\leq 4$ and the $16$ vectors
$(\pm 1/2, \pm 1/2, \pm 1/2, \pm 1/2)$. It is easy to see that $\mathsf{D}'_{4}$
is isomorphic to $\mathsf{D}_{4}$.
The root system $\mathsf{F}_4$ is the union of $\mathsf{D}_4$ and $\mathsf{D}'_{4}$.
The isometry group of $\mathsf{F}_4$ coincides with $W(\mathsf{F}_4)$. However, 
there is an isometry $\sigma$ of $\RR^4$ exchanging $\mathsf{D}_4$ with
${\sqrt{2}}\mathsf{D}'_{4}$. We denote by $W^{\sigma}(\mathsf{F}_4)$
the group generated by $W(\mathsf{F}_4)$ and $\sigma$. The group $W^{\sigma}(\mathsf{F}_4)$
does not preserve  $\mathsf{F}_4$, but it preserves the tessellation of $\RR^4$ by fundamental 
chambers of $W(\mathsf{F}_4)$.

\medskip
Now let us describe the root systems not corresponding to Lie algebras. 
Here we use the vectors of unit length only. For $\mathsf{I}_2(m)$ we assume $m\ge 5$ 
to exclude $\mathsf{B}_2$.

The root systems $\mathsf{H}_3$ and $\mathsf{H}_4$, see \cite{coxeter}, 
have coordinates in $\QQ(\sqrt{5})$. As a consequence,
the Galois involution $\sigma:\sqrt{5}\mapsto -\sqrt{5}$
can transform them into another root system, which is
isomorphic to the original one.
This involution exchanges the pair of roots of angle
$\arccos (\pm \frac{1+\sqrt{5}}{4})$ with the pair of roots of angle
$\arccos (\pm \frac{1-\sqrt{5}}{4})$.
We denote by $W^{\sigma}(\mathsf{H}_i)$ the group of
permutations of $\mathsf{H}_i$ generated by $W(\mathsf{H}_i)$
and the permutation induced by $\sigma$. As in the case of $\mathsf{F}_4$,
$W^{\sigma}(\mathsf{H}_i)$ preserves the $W(\mathsf{H}_i)$-action on $\RR^i$.

The root system $\mathsf{I}_2(m)$ for $m\geq 5$ is formed by the
$2m$ vectors $(\cos(\frac{\pi k}{m}), \sin(\frac{\pi k}{m}))_{1\leq k\leq 2m}$.
The roots form a regular $2m$-gon, so the group $\Isom(\mathsf{I}_2(m))$ is a dihedral group
isomorphic to $\mathsf{I}_2(2m)$. 

\medskip

As we have mentioned above, all isometries of $\mathsf{R}$ induce automorphisms of $M({\mathsf R})$.
To compute the groups $\Aut({\mathsf R})$ we need to list all the automorphisms of $M({\mathsf R})$
not induced by isometries of root systems.

\section{Exceptional root systems}
\label{ex}
In this section, we prove Theorems~\ref{m3} and~\ref{ClassifResult} for exceptional root systems.
 
Notice that $M({\mathsf R})$ is isomorphic to
$M({\mathsf R}')$ if and only if ${\mathsf R}$ and ${\mathsf R}'$ are
isomorphic as root systems. Indeed, if $c$ is a circuit of ${\mathsf{R}}$, then $c$
is contained in an irreducible root system. Thus, the question is
reduced to the irreducible case for which the classification gives
the answer by simply noticing that isomorphism preserves the dimension and
the number of elements.

\begin{proof}[Proof of the theorems for ${\mathsf R}=\mathsf{E}_6,\mathsf{E}_7,\mathsf{E}_8,\mathsf{F}_4,\mathsf{H}_3,\mathsf{H}_4,\mathsf{I}_2(m)$] 
First, we explain the assertion (ii) of Theorem~\ref{ClassifResult}. If
$\mathsf{R}=\sum_{i=1}^m p_i \mathsf{R}_i$ and $\phi\in \Aut(M(\mathsf{R}))$,
then $\phi$ permutes the components isomorphic to $\mathsf{R}_i$ and
thus $\phi$ belongs to the mentioned product of wreath products.

Now consider irreducible root systems. We prove the theorems using case by case analysis.
For $\mathsf{I}_2(m)$  it is clear that any two non-antipodal
roots form a basis and thus $\Aut(M(\mathsf{I}_2(m)))=\Sym(m)$.
All the circuits are of order $3$, so Theorem~\ref{m3} holds as well.

\medskip

If ${\mathcal F}$ is a family of subsets of $X$,
denote by $G(X, {\mathcal F})$  the graph on
$|X|+|{\mathcal F}|$
vertices with vertex $x\in X$ being adjacent
to $S\in {\mathcal F}$ if and only if $x\in S$.
The group $\Aut(G(X, {\mathcal F}))$ of automorphisms
of the graph $G(X, {\mathcal F})$ is identified
with a subgroup of the symmetric group $\Sym(X)$.
The program {\tt nauty} \cite{nauty} can compute the automorphism
group of a graph $G$. Moreover, if one attributes colors to vertices
then this program can compute the group of automorphism preserving
those colors.

If ${\mathcal F}$ is a family of subsets of $X$ invariant
under the automorphism group $\Aut(M)$ of a matroid $M$
on $X$, then
$\Aut(M)\subset \Aut(G(X, {\mathcal F}))$.
If one takes ${\mathcal F}={\mathcal I}$,
${\mathcal B}$ or ${\mathcal C}$, then we have equality.
Take $\mathsf{R}$ an irreducible root system.
If we can check that all elements of $\Aut(G(X, {\mathcal C}_3))$
are actually symmetries of $M(\mathsf{R})$ then we have 
$\Aut(M(\mathsf{R}))=\Aut(G(X, {\mathcal C}_3))$
and proved Theorem~\ref{m3} for $\mathsf{R}$. At the same time, 
the set $\mathcal{C}_3$ is not large for the exceptional root systems, 
and it can be easily computed, as well as the group $\Aut(G(X, {\mathcal C}_3))$.  
This method works directly for the root systems $\mathsf{E}_6$, $\mathsf{E}_7$, $\mathsf{E}_8$, 
$\mathsf{F}_4$, $\mathsf{H}_3$ and $\mathsf{H}_4$\footnote{All sources of the programs of 
this paper are available at \url{http://www.liga.ens.fr/~dutour/RootMatroid/}}.

In particular, we compute the automorphism groups of the root systems matroids themselves. 
The results are listed in Table~\ref{answer}, which completes the proof of Theorem~\ref{ClassifResult}.  

\end{proof}

\section{Proof of the $\mathsf{A}_n$, $\mathsf{D}_n$, $\mathsf{B}_n$ cases}
\label{abd}

In~\cite{FT} the correspondence between circuits in root systems of 
simple Lie algebras and 
Euclidean simplices generating discrete reflection groups is described.

Any circuit in a root system defines
(up to similarity) a Euclidean simplex generating
a discrete reflection group.
Given $(n+1)$-tuple of roots, we take $n+1$ hyperplanes
orthogonal to these roots and passing through the origin.
Now choose any of the hyperplanes and translate it
in such a way that the image does not contain the origin.
The new hyperplane together with the remaining $n$ ones
define a simplex.

Conversely, any Euclidean simplex generating a discrete reflection
group defines a circuit in some root system in the
following way: faces of codimension one are orthogonal to roots of 
some affine root system. These roots define a circuit of the underlying
finite root system.

As a consequence, using the classification of simplices generating discrete 
reflection groups obtained in~\cite{FT}, we get the circuits of the
root systems $\mathsf{A}_n$, $\mathsf{B}_n$, and
$\mathsf{D}_n$.

Denote by $V(\mathsf{R})$ the set of pairs of opposite roots of $\mathsf{R}$.
 If $J$ is a subset of $V(\mathsf{R})$, we say that a
root $v\in \mathsf{R}$ belongs to $J$ if $J$ contains
a vertex $(v,-v)\in V(\mathsf{R})$.
In fact, there is no difference in defining $J$ in terms
of roots or pairs of opposite roots.
We will use pairs sometimes to emphasize that we are
able to choose any representative from a pair.

To prove Theorem~\ref{m3} it is
sufficient to show that if the set of circuits of
$M(\mathsf{R})$ of order $3$ is invariant under some
permutation of elements of $V(\mathsf{R})$, then the
set of all circuits of $M(\mathsf{R})$ is invariant, too.
The latter is an immediate corollary of the following
lemma.

\begin{lemma}\label{induction}
Let $k\ge 3$ be an integer not exceeding $n$, and let
$f$ be a permutation of elements of $V(\mathsf{R})$.
If the set of circuits of $M(\mathsf{R})$ of order
not exceeding $k$ is invariant under $f$, then so is the
set of circuits of order $k+1$.
\end{lemma}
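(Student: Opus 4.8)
The strategy is an inductive argument on circuit size. We are given that $f$ preserves all circuits of order $\le k$, and we want to show it preserves circuits of order $k+1$. The natural approach is to \emph{characterize} a circuit of order $k+1$ purely in terms of the circuits of smaller order that it ``contains'' or interacts with, since any such combinatorial characterization is automatically respected by $f$.

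Let me think about what a circuit of order $k+1$ actually is. A circuit $C$ of order $k+1$ is a minimally dependent set: the $k+1$ roots are linearly dependent, but every proper subset (of size $k$) is independent. Removing the root-system geometry for a moment, a circuit corresponds to a Euclidean simplex generating a discrete reflection group (via the correspondence from~\cite{FT} recalled just above the lemma). So I would try to detect the circuit $C = \{v_0, v_1, \dots, v_k\}$ by examining the smaller circuits among roots ``near'' $C$. The cleanest route: given $k+1$ roots, their dependence is witnessed by a linear relation $\sum \lambda_i v_i = 0$ with all $\lambda_i \neq 0$ (minimality forces every coefficient nonzero). I want to express membership in $\mathcal{C}_{k+1}$ using only $\mathcal{C}_{\le k}$.

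Here is the concrete plan. Take a set $S$ of $k+1$ pairs in $V(\mathsf{R})$; I claim $S \in \mathcal{C}_{k+1}$ iff $S$ satisfies some property $P$ phrased in terms of the lower circuits. The most promising candidate for $P$ is: \emph{every $k$-subset of $S$ is independent (not a circuit or contained in one), yet $S$ is ``forced'' to be dependent because it can be assembled from overlapping small circuits.} Concretely, I would look at a root $v \in \mathsf{R} \setminus S$ (or a suitable auxiliary root) and the circuits of size $\le k$ formed by $v$ together with subsets of $S$; the pattern of which such subsets are circuits should pin down the linear-algebra type of $S$. Since $f$ is already known to permute $\mathcal{C}_{\le k}$, and $f$ permutes all of $V(\mathsf{R})$, the image $f(S)$ will satisfy exactly the same incidence pattern $P$, hence $f(S) \in \mathcal{C}_{k+1}$. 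I would verify independence of the proper $k$-subsets directly: a proper subset of a circuit is independent, and $f$ applied to a circuit of order $\le k$ stays a circuit, while $f$ applied to an independent set of size $\le k$ stays independent (this is the content of the hypothesis, once one notes that independence of size-$m$ sets for $m \le k$ is itself determined by non-membership in $\mathcal{C}_{\le k}$, which uses the matroid axioms).

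The main obstacle will be establishing the characterization $P$: namely showing that the ``dependence type'' of $k+1$ roots is genuinely determined by the collection of lower-order circuits they participate in together with auxiliary roots. In a general matroid this is false, so the proof must exploit the specific structure of root-system matroids --- most likely the classification of circuits of $\mathsf{A}_n$, $\mathsf{B}_n$, $\mathsf{D}_n$ coming from~\cite{FT} via Euclidean simplices. I expect one must argue case by case on the shape of the candidate circuit $S$ (which affine diagram / simplex it corresponds to), and for each shape exhibit a specific configuration of smaller circuits that certifies both the dependence of $S$ and the minimality, in a way invariant under $f$. Getting a uniform argument across all three classical families, rather than a diagram-by-diagram check, is where the real work lies.
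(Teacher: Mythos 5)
Your plan points in the same direction as the paper's proof: both rest on the circuit classification from~\cite{FT}, and both aim to certify dependence of the image of a circuit by means of smaller circuits overlapping in an auxiliary root. But as a proof it has a genuine gap, and it is exactly the one you flag yourself: the property $P$ is never defined, and the claimed equivalence ``$S\in{\mathcal C}_{k+1}$ iff $P(S)$'' is never established. The lemma \emph{is} the construction of such a certificate, so deferring it means the proof has not started. Note also that your framing demands more than is needed: you do not need an if-and-only-if characterization of ${\mathcal C}_{k+1}$ invariant under $f$. It suffices to show, for each circuit $J$ of order $k+1$, that $f(J)$ is linearly dependent; minimality of $f(J)$ then comes for free, because a circuit of order $\le k$ inside $f(J)$ would pull back under $f^{-1}$ (which also preserves the circuits of order $\le k$, since $f$ permutes a finite collection) to a circuit of order $\le k$ inside the circuit $J$ --- impossible.

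The concrete work that your plan omits, and that constitutes the paper's proof, is the following. Using the graph encoding of circuits (cycles for $\mathsf{A}_n$; edge-colored graphs with a parity condition on red edges for $\mathsf{D}_n$; additionally marked vertices for $\mathsf{B}_n$), one shows that every circuit $J$ of order $k+1$ admits an auxiliary root $v$ such that $J\cup\{v\}$ is the union of two circuits $C_1,C_2$ of order at most $k$ with $C_1\cap C_2=\{v\}$: for $\mathsf{A}_n$, a chord $e_1-e_3$ of the $(k+1)$-cycle; for $\mathsf{D}_n$, a chord or a connecting edge whose color (sign) is chosen so that the red-edge parity condition holds for the new short cycle; for $\mathsf{B}_n$, for instance a short root $e_2$ splitting off a circuit of order $3$ at a marked vertex. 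Then $f(C_1)$ and $f(C_2)$ are circuits by hypothesis, each carrying a linear relation with all coefficients nonzero, and eliminating $f(v)$ from the two relations yields a dependence supported on $f(J)$; even its nontriviality needs a word (the paper checks that the combined relation still involves, with nonzero coefficients, roots coming from both $C_1\setminus\{v\}$ and $C_2\setminus\{v\}$). Whether such a splitting exists for every circuit shape is precisely what you call ``the real work'': it is not automatic --- it requires the case analysis over the $\mathsf{A}_n$, $\mathsf{D}_n$, $\mathsf{B}_n$ circuit shapes --- and without it there is no proof.
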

\begin{proof} We prove the lemma for root systems $\mathsf{A}_n$,
$\mathsf{D}_n$, and $\mathsf{B}_n$ separately:

{\bf Case} $\mathsf{R}=\mathsf{A}_n$.\\
For any set $J$ of elements of $V(\mathsf{R})$ we draw
the following graph $\Gamma(J)$:
\begin{itemize}
\item the vertices of $\Gamma(J)$ are $e_i$ for
those $i$ which take part in the expression of at least
one element of $J$;
\item two vertices $e_i$ and $e_j$ are joined by
an edge if the root $\pm (e_i-e_j)\in J$.
\end{itemize}
As it is shown in~\cite{FT}, graphs $\Gamma(J)$
corresponding to circuits $J$ of $M(\mathsf{A}_n)$
are cycles, and conversely.
The order of $J$ is equal to the number
of edges in $\Gamma(J)$.
Take any circuit $J$ of order $k+1$.
It is sufficient to prove that $f(J)$ is linearly dependent. 

Since $\Gamma(J)$ is a cycle, we may assume that $J=\{e_1-e_2, e_2-e_3,\dots, e_k-e_{k+1}, e_1-e_{k+1}\}$.
Now consider $J'=J\cup \{e_1-e_3\}$.
The graph $\Gamma(J')$ consists of cycles
$\{e_1-e_2, e_2-e_3, e_1-e_3\}$ and
$\{e_1-e_3, e_3-e_4,\dots, e_k-e_{k+1}, e_1-e_{k+1}\}$.
By the assumption of the lemma, the images of 
these circuits under $f$ are circuits.
Therefore, from one cycle we may express
$f(e_1-e_3)$ as a linear combination of $f(e_1-e_2)$
and $f(e_2-e_3)$, and from another cycle we may express
$f(e_1-e_3)$ as a linear combination
of $f(e_3-e_4),\dots,f(e_k-e_{k+1}), f(e_1-e_{k+1})$.
Subtracting one expression from another, we obtain
a dependence on vectors of $f(J)$.

{\bf Case} $\mathsf{R}=\mathsf{D}_n$.\\
For any set $J$ of elements of $V(\mathsf{R})$ we draw the
following edge colored graph $\Gamma(J)$:
\begin{itemize}
\item the vertices are $e_i$ for those $i$ which
take part in the expression of at least one element of $J$;
\item two vertices $e_i$ and $e_j$ are joined by
a {\it red} edge if the root $\pm(e_i+e_j)\in J$;
\item two vertices $e_i$ and $e_j$ are joined by
a {\it black} edge if the root $\pm(e_i-e_j)\in J$.
\end{itemize}

According to~\cite{FT}, circuits correspond either
to
\begin{itemize}
\item cycles with even number of red edges,
\item or to two cycles $C_1$, $C_2$, possibly of length $2$, joined by a path,
such that the number of red edges in each cycle is odd (edges of the path are 
colored in any way). 

\end{itemize}
We take any circuit $J$ of order $k+1$ and show that
$f(J)$ is linearly dependent.
If one of the cycles of $\Gamma(J)$ contains at
least $4$ vertices, we do almost the same procedure
as in the $\mathsf{A}_n$ case.
Suppose the cycle contains vertices $e_1,e_2,e_3$, and $e_4$.
Consider $J'$ obtained from $J$ by adding either
$e_1-e_3$ or $e_1+e_3$ so that the number
of red edges in the new cycle of order $3$ is even.
Then we obtain two new circuits of order at most $k$,
and they intersect by a unique root.
A reasoning similar to the one for $\mathsf{A}_n$
completes the proof of this case.

So, we may assume that all cycles have length at most $3$.
Since $k+1\ge 4$, $\Gamma(J)$ contains two cycles.
Now, take two vertices of valency two belonging to
distinct cycles of $\Gamma(J)$, and join them by an edge.
Choose the color of the edge in such a way that one
of the shortest cycles containing this edge contains an
even number of red edges.
Clearly, this cycle corresponds to a circuit of
order not exceeding $k$.
If we hide the edges of this cycle belonging to two
initial cycles of $\Gamma(J)$, we obtain another
circuit of $M(\mathsf{R})$ of order not exceeding $k$.
By the assumption, the images of corresponding sets
of roots under $f$ are circuits again.
By minimality, the corresponding linear dependencies
contains all the roots with non-zero coefficients.
Eliminating the new root from two linear dependencies,
we obtain that $f(J)$ is linearly dependent.   

{\bf Case} $\mathsf{R}=\mathsf{B}_n$.\\
For any set $J$ of elements of $V(\mathsf{R})$ we draw the
following graph $\Gamma(J)$ with colored edges and
some vertices marked:
\begin{itemize}
\item the vertices are $e_i$ for those $i$ which
take part in the expression of at least one element of $J$;
\item two vertices $e_i$ and $e_j$ are joined by
a {\it red} edge if the root $\pm(e_i+e_j)\in J$;
\item two vertices $e_i$ and $e_j$ are joined by
a {\it black} edge if the root $\pm(e_i-e_j)\in J$;
\item a vertex $e_i$ is {\it marked} if the root
$\pm e_i\in J$.
\end{itemize}
By~\cite{FT}, circuits correspond either to
\begin{itemize}
\item the graphs of $\mathsf{D}_n$ case,
\item or a path with two end vertices being marked,
\item or to a cycle with an odd number of red edges and
a path linking the cycle to a unique marked vertex.
\end{itemize}
The order of $J$ is the number of edges of $\Gamma(J)$
plus the number of marked vertices.
Again, we consider any circuit $J$ of order $k+1$
and show that $f(J)$ is linearly dependent.  

If $\Gamma(J)$ does not contain any marked vertices,
then $J$ contains long roots only, so it belongs
to $\mathsf{D}_n$ and the proof repeats the above one.
Thus, we may assume that we have at least one
marked vertex.

Suppose that $\Gamma(J)$ contains a path to a marked vertex $e_1$.
Let the neighboring vertex be $e_2$.
Consider $J'=J\cup \{e_2\}$.
Then $J'$ consists of two circuits of $M(\mathsf{R})$,
namely of circuit of order $3$ containing $e_1$,
$e_2$ and the edge joining these two vertices, and
the remaining elements of $J$ together with $e_2$.
Both circuits have order at most $k$.
By the same method we see that $f(J)$ is linearly
dependent.

We are left with the case when $\Gamma(J)$ is a cycle
with one vertex marked. If there are at least $4$
vertices in $\Gamma(J)$, we take two non-neighboring
non-marked vertices, join them by an edge of an
appropriate color and use the same method as in the
$\mathsf{D}_n$ case.
So, the only interesting case is when $J$ is of order $4$.
We may assume that $J=\{e_1, e_1\pm e_2,e_2\pm e_3,
e_1\pm e_3\}$ and that the number of plus signs occurring
in its expression is odd.
The set $J'=J\cup \{e_2\}$ contains the following two circuits
of $M(\mathsf{R})$: $\{e_1,e_1\pm e_2,e_2\}$, and
$\{e_2,e_2\pm e_3,e_1\pm e_3,e_1\}$.
Since they both have paths to marked vertices, 
their images under $f$ are still circuits; so we have
two linear dependencies on the images.
Eliminating $e_2$ from them, we obtain a dependence
on $f(J)$. The result is not trivial since one of the
dependencies contains $f(e_1\pm e_2)$ with non-zero
coefficient, and the other one contains images of two
long roots with non-zero coefficients. 

\end{proof}


\begin{thebibliography}{99}






\bibitem{bourbaki}
N. Bourbaki, {\em Groupes et alg\`ebres de Lie, Chapitres \rm{IV}--{VI}}, Hermann, Paris, 1968.  

\bibitem{coxeter}
H.S.M. Coxeter, {\em Regular polytopes}, Dover Publications, New York, 1973.






\bibitem{FT}
A.~Felikson, P.~Tumarkin, {\em Euclidean simplices generating discrete reflection groups},
European J. Combin. {\bf 28} (2007) 1056--1067. 

\bibitem{cubesimplexcases}
L. Fern, G. Gordon, J. Leasure and S. Pronchik, {\em Matroid Automorphisms and Symmetry Groups}, Combinatorics, Probability and Computing {\bf 9} (2000) 105--123.


\bibitem{F4syst}
S. Fried, A. Gerek, G. Gordon and A. Peruni\u ci\'c,
{\em Matroid automorphisms of the $F_4$ root system}, 
Electronic journal of combinatorics {\bf 14} (2007) R78.

\bibitem{ehly}
K. Ehly, G. Gordon, {\em Matroid automorphisms of the root system $H_3$},
Geom. Dedicata {\bf 130} (2007) 149--161.


\bibitem{humphreyscoxeter}
J.E. Humphreys, {\em Reflection groups and Coxeter groups},
Cambridge University Press, 1990.

\bibitem{nauty}
B.D. McKay, {\em The nauty program}, \url{http://cs.anu.edu.au/people/bdm/nauty/}


\end{thebibliography}
\end{document}